\font\smallit=cmti10
\font\smalltt=cmtt10
\renewcommand\section{\@startsection {section}{1}{\z@}
{-30pt \@plus -1ex \@minus -.2ex}
{2.3ex \@plus.2ex}
{\normalfont\normalsize\bfseries}}
\renewcommand\subsection{\@startsection{subsection}{2}{\z@}
{-3.25ex\@plus -1ex \@minus -.2ex}
{1.5ex \@plus .2ex}
{\normalfont\normalsize\bfseries}}
\renewcommand{\@seccntformat}[1]{\csname the#1\endcsname. }
\newtheorem{theorem}{Theorem}
\newtheorem{lemma}{Lemma}
\newcommand{\beq}{\begin{equation}}
\newcommand{\eeq}{\end{equation}}
\def\({\left(}
\def\){\right)}
\begin{document}

\begin{center}
\uppercase{\bf  Efficient computation of terms of linear recurrence sequences of  any order}
\vskip 20pt
{\bf Dmitry I. Khomovsky}\\
{\smallit Lomonosov Moscow State University}\\
{\tt khomovskij@physics.msu.ru}
\end{center}
\vskip 30pt

\centerline{\smallit Received: , Revised: , Accepted: , Published: } % We will fill in the dates
\vskip 30pt

\centerline{\bf Abstract}

\noindent
In this paper we give efficient algorithms for computing second-, third-, and fourth-order linear recurrences. We also present an algorithm scheme  for computing  terms with the indices $N,\ldots,N+n-1$ of an $n${\it th}-order linear recurrence. Unlike Fiduccia's algorithm our approach uses certain formulas for modular polynomial squarings.

\pagestyle{myheadings}
\markright{\smalltt \hfill}
\thispagestyle{empty}
\baselineskip=12.875pt
\vskip 30pt

\section{Introduction}
Let $\{W_k(a_0,\ldots,a_{n-1};p_0\ldots p_{n-1})\}$ be an $n{\it th}$-order linear recurrence defined by the relation  \beq\label{f01}f_{k+n}=p_0f_{k+n-1}+p_1f_{k+n-2}+\ldots+p_{n-1}f_{k},\eeq
with the initial values $W_i=a_i$ $(0\leq i\leq n-1)$. The characteristic polynomial is
\beq\label{f09}
g(x)=x^n-(p_0x^{n-1}+p_1x^{n-2}+\ldots+p_{n-1}).
\eeq
A widely known particular case is the Lucas sequences $\{U_{k}(P,Q)\}$, $\{V_{k}(P,Q)\}$. They are defined recursively by
\beq \label{DefLuc}
f_{k+2}=P f_{k+1}-Q f_{k},
\eeq
with the initial values $U_{0}=0,\ U_{1}=1,\, V_{0}=2,\, V_{1}=P$. The characteristic polynomial in this case is $x^2-P x+Q$.

Computation of linear recurrences has been studied by many authors \cite{0,01,02}. The most effective algorithm was proposed by Fiduccia in 1985. To obtain the $N${\it th} term of an $n${\it th}-order linear recurrence using this method, we need to compute $r(x)=x^N\bmod{g(x)}$, where $g(x)=x^n-\sum_{i=0}^{n-1}p_ix^{n-1-i}$. Then we compute $r(C)$, where $C$ is  the $n\times n$ companion matrix of the linear recurrence:
\beq
C=\begin{pmatrix}
0 & & & &p_{n-1}\\
1 & & & &p_{n-2}\\
  &1& & &p_{n-3}\\
  & & \ddots& &\vdots\\
  & & & 1&p_{0}\\
\end{pmatrix}.
\eeq
Finally, we multiply the row vector of initial values $(a_0,\ldots,a_{n-1})$  by the first column of $r(C)$ and obtain the $N${\it th} term. The computational complexity of this algorithm is $O(\mu(n) \log N + n^3)$. Here, $\mu(n)$ is the total number of operations required to multiply two polynomials of degree $n-1$ in the polynomial ring. Fiduccia actually manages to exploit the structure of the matrix $C$ in order to decrease the complexity to $O(\mu(n)\log N)$, see  Theorem $3.1$ and Proposition $3.2$ in \cite{0}.

\section{\bf{Computation of second-order linear recurrences}}
Let  the  second-order linear recurrence sequence  $\{W_k\}$ be defined by the relation\footnote{For  recurrences of order greater than $2$  we will use the relation $(\ref{f01})$.} $W_{k+2}=P W_{k+1}-Q W_{k}$, with $W_0=A, W_1=B$.  It was intensively studied by Horadam \cite{1.11,1.12}.

For the Lucas sequences we have the following matrix formula:
\beq\label{M}
\begin{pmatrix}
U_{k+1} & V_{k+1} \\
U_{k} & V_{k}
\end{pmatrix}=M\begin{pmatrix}
U_{k} & V_{k} \\
U_{k-1} & V_{k-1}
\end{pmatrix},\,\,\, \text{ where } M=\begin{pmatrix}
P & -Q \\
1 & 0
\end{pmatrix}.
\eeq
Then
\beq\label{M1}
\begin{pmatrix}
U_{k+1} & V_{k+1} \\
U_{k} & V_{k}
\end{pmatrix}=M^{k}\begin{pmatrix}
1 & P \\
0 & 2
\end{pmatrix}.
\eeq
\begin{lemma}\label{L2}
For the sequence $\{W_k(A,B;P,Q)\}$ the following holds:
\beq\label{f06}
W_k=B U_k-A Q U_{k-1},\eeq
\beq\label{f07}
W_k=(B-A P)U_k+AU_{k+1}.\eeq\end{lemma}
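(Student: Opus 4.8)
The plan is to prove both identities by showing each side satisfies the same second-order recurrence and agrees on two consecutive initial values, since a second-order linear recurrence is determined uniquely by any two consecutive terms. I would first recall that $\{U_k\}$ satisfies $U_{k+2}=PU_{k+1}-QU_k$ with $U_0=0,\ U_1=1$, and that $\{W_k\}$ satisfies the same recurrence $W_{k+2}=PW_{k+1}-QW_k$ with $W_0=A,\ W_1=B$. The key observation is that \emph{any} fixed linear combination of shifted $U$-sequences, such as $BU_k-AQU_{k-1}$ or $(B-AP)U_k+AU_{k+1}$, again satisfies this same recurrence, because the recurrence is linear and shift-invariant.

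For identity $(\ref{f06})$, I would define $X_k:=BU_k-AQU_{k-1}$ and verify that $X_k$ obeys $X_{k+2}=PX_{k+1}-QX_k$; this is immediate from linearity since both $U_k$ and $U_{k-1}$ do. It then remains to check the base cases $X_0=A$ and $X_1=B$. Using $U_0=0,\ U_1=1$ and the convention $U_{-1}=-1/Q$ (which follows from running the recurrence backward, $U_1=PU_0-QU_{-1}$), one computes $X_0=B\cdot 0-AQ\cdot U_{-1}=A$ and $X_1=B\cdot 1-AQ\cdot U_0=B$. Since $X_k$ and $W_k$ satisfy the same recurrence and agree at $k=0,1$, they coincide for all $k$.

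For identity $(\ref{f07})$, the cleanest route is to derive it algebraically from $(\ref{f06})$ rather than repeat the induction. I would substitute the recurrence relation $U_{k+1}=PU_k-QU_{k-1}$, which gives $QU_{k-1}=PU_k-U_{k+1}$, into $(\ref{f06})$: then $W_k=BU_k-A(PU_k-U_{k+1})=(B-AP)U_k+AU_{k+1}$, as claimed. Alternatively one could verify the base cases and recurrence directly as above, but the substitution makes the equivalence of the two formulas transparent.

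The only genuine subtlety is the treatment of the index $k-1$ at the boundary, i.e. fixing the value $U_{-1}=-1/Q$ so that the base case $X_0=A$ checks out; everything else is routine linearity and shift-invariance. I would state this convention explicitly up front to avoid any ambiguity when $k=0$.
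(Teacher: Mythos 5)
Your proof is correct, but it takes a genuinely different route from the paper's. The paper \emph{derives} the first identity by the matrix method: writing $\left(\begin{smallmatrix}W_{k+1}\\ W_k\end{smallmatrix}\right)=M^k\left(\begin{smallmatrix}B\\ A\end{smallmatrix}\right)$ with $M=\left(\begin{smallmatrix}P & -Q\\ 1 & 0\end{smallmatrix}\right)$, splitting $\left(\begin{smallmatrix}B\\ A\end{smallmatrix}\right)=B\left(\begin{smallmatrix}1\\ 0\end{smallmatrix}\right)+A\left(\begin{smallmatrix}0\\ 1\end{smallmatrix}\right)$, and using $M\left(\begin{smallmatrix}0\\ 1\end{smallmatrix}\right)=\left(\begin{smallmatrix}-Q\\ 0\end{smallmatrix}\right)$ to get $AM^k\left(\begin{smallmatrix}0\\ 1\end{smallmatrix}\right)=-AQ\,M^{k-1}\left(\begin{smallmatrix}1\\ 0\end{smallmatrix}\right)=-AQ\left(\begin{smallmatrix}U_k\\ U_{k-1}\end{smallmatrix}\right)$; the second identity then follows by exactly the substitution $QU_{k-1}=PU_k-U_{k+1}$ that you also use. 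You instead \emph{verify} the first identity by the uniqueness principle: both sides satisfy $x_{k+2}=Px_{k+1}-Qx_k$ and agree on two consecutive values. Both arguments are sound. Your approach buys elementarity and brevity (no matrix algebra); the paper's buys constructiveness --- the decomposition produces the formula rather than presupposing it --- and, more to the point, it is precisely the computation the author reuses for the third- and fourth-order analogues later in the paper, which is the stated reason for proving this known lemma at all. One blemish in your write-up: anchoring the base cases at $k=0,1$ forces you to introduce $U_{-1}=-1/Q$, which silently assumes $Q$ is invertible and leaves the integers unless $Q=\pm 1$; since the identity involves $U_{k-1}$, it is naturally a statement for $k\geq 1$, and anchoring at $k=1,2$ instead (namely $BU_1-AQU_0=B=W_1$ and $BU_2-AQU_1=PB-QA=W_2$) proves it on that full range with no convention and no division.
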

\begin{proof}
We have:
\begin{align}
\begin{pmatrix}
W_{k+1}  \\
W_{k}
\end{pmatrix}&=M^{k}\begin{pmatrix}
B \\
A
\end{pmatrix}=B M^k \begin{pmatrix}
1 \\
0
\end{pmatrix}+A M^k \begin{pmatrix}
0 \\
1
\end{pmatrix}=B \begin{pmatrix}
U_{k+1} \\
U_{k}
\end{pmatrix}+A M^{k-1}\begin{pmatrix}
-Q \\
0
\end{pmatrix}\nonumber\\
&=\begin{pmatrix}
B U_{k+1} -A Q U_{k}\\
B U_{k} -A Q U_{k-1}
\end{pmatrix}.\nonumber
\end{align}
From this we get $(\ref{f06})$. By the definition of the Lucas sequence $Q U_{k-1}=PU_{k}-U_{k+1}$. Using this, we obtain $(\ref{f07})$.
\end{proof}
The obtained result is known (for example, see \cite{1.11}). We see that computation of remote terms  of $\{W_k(A,B;P,Q)\}$ can be  done by the Lucas sequence $\{U_k(P,Q)\}$.
In a sense, $\{U_k\}$ is a basic.

We note that the result given in the following theorem is  known, moreover, there is a generalization of this \cite{000}. But we still give the proof,
since we will use a similar approach for higher-order linear recurrences.
\begin{theorem}\label{T1}
Let $\{U_{k}(P,Q)\}$ be  the Lucas sequence. Then
\beq\label{f02}
\begin{pmatrix}
U_{mk+1}  \\
U_{mk}
\end{pmatrix}=\begin{pmatrix}
U_{k+1} & - Q U_{k}\\
U_{k} & U_{k+1}-P U_k
\end{pmatrix}^{m-1}\begin{pmatrix}
U_{k+1}  \\
U_{k}
\end{pmatrix}.
\eeq
\end{theorem}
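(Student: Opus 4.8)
The plan is to recognize the $2\times2$ matrix in $(\ref{f02})$ as a power of the matrix $M$ from $(\ref{M})$, and then to collapse the whole product using the law of exponents $M^{k(m-1)}M^{k}=M^{km}$.

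First I would compute $M^{k}$ entrywise. Exactly as in the proof of Lemma~\ref{L2}, applying $M^{k}$ to the first standard basis vector gives $M^{k}\binom{1}{0}=\binom{U_{k+1}}{U_{k}}$ (this is $(\ref{M1})$ restricted to the first column). For the second column, note that $M\binom{0}{1}=\binom{-Q}{0}=-Q\binom{1}{0}$, so $M^{k}\binom{0}{1}=-Q\,M^{k-1}\binom{1}{0}=-Q\binom{U_{k}}{U_{k-1}}$. Assembling the two columns yields
\[
M^{k}=\begin{pmatrix} U_{k+1} & -QU_{k}\\ U_{k} & -QU_{k-1}\end{pmatrix}.
\]

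Next I would match this with the matrix in $(\ref{f02})$. The defining relation $U_{k+1}=PU_{k}-QU_{k-1}$ rewrites as $-QU_{k-1}=U_{k+1}-PU_{k}$, so the $(2,2)$ entries agree and the matrix in $(\ref{f02})$ is precisely $M^{k}$. The claim then follows immediately: its right-hand side equals $(M^{k})^{m-1}\binom{U_{k+1}}{U_{k}}=(M^{k})^{m-1}M^{k}\binom{1}{0}=M^{km}\binom{1}{0}=\binom{U_{mk+1}}{U_{mk}}$, which is the left-hand side.

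There is essentially no obstacle here: the entire content is the identification of the matrix with $M^{k}$, after which the result is a one-line consequence of $M^{k(m-1)}M^{k}=M^{km}$. The only point requiring care is reading off the second column of $M^{k}$ correctly, keeping track of the factor $-Q$ and the index shift to $U_{k-1}$. One could instead argue by induction on $m$ using $(\ref{M})$, but the power-of-$M$ route is cleaner and is exactly the pattern I would expect to reuse for the higher-order recurrences treated later in the paper.
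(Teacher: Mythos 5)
Your proof is correct, and its skeleton is the same as the paper's: both arguments reduce to identifying the matrix in $(\ref{f02})$ with $M^k$ and then collapsing $(M^k)^{m-1}M^k\binom{1}{0}=M^{mk}\binom{1}{0}$. Where you genuinely differ is in how the explicit form of $M^k$ is derived. The paper never computes $M^k$ column by column: since $(\ref{M1})$ describes $M^kS$ rather than $M^k$ itself, it inserts the factor $SS^{-1}$ with $S=\begin{pmatrix}1&P\\0&2\end{pmatrix}$, reads off $M^kS$ as the matrix of $U$'s and $V$'s, multiplies by $S^{-1}$, and then eliminates $V_k,V_{k+1}$ from the result $(\ref{mkss})$ using the classical identity $V_k=PU_k-2QU_{k-1}$, which it first extracts from Lemma \ref{L2}. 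You instead observe that $M\binom{0}{1}=-Q\binom{1}{0}$, hence $M^k\binom{0}{1}=-Q\binom{U_k}{U_{k-1}}$, which produces the second column of $M^k$ at once; this is exactly the trick the paper uses \emph{inside} the proof of Lemma \ref{L2}, here reapplied to the matrix itself. Both routes arrive at the same identity $M^k=\begin{pmatrix}U_{k+1}&-QU_k\\U_k&-QU_{k-1}\end{pmatrix}$, after which the rewriting $-QU_{k-1}=U_{k+1}-PU_k$ and the power-law step are identical. Your version is shorter and cleaner: it bypasses the auxiliary matrices $S,S^{-1}$ and the sequence $\{V_k\}$ entirely, and the column-by-column computation is the pattern that scales to the higher-order cases. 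What the paper's detour buys is chiefly expository: it keeps the argument tied to the full $U$--$V$ matrix formula $(\ref{M1})$ and re-derives the identity $V_k=PU_k-2QU_{k-1}$ along the way. (Both proofs, yours and the paper's, implicitly assume $k\geq 1$ so that $U_{k-1}$ is defined by the forward recurrence; the case $k=0$ is trivial since the matrix is then the identity.)
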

\begin{proof}
We use the notations
\begin{align}\label{M2}
&S=\begin{pmatrix}
1 & P \\
0 &2
\end{pmatrix},\,\,
S^{-1}=\begin{pmatrix}
1 & -P/2 \\
0 &1/2
\end{pmatrix}.
\end{align}
We have
\begin{align}\label{Rep1}
\begin{pmatrix}
U_{mk+1}  \\
U_{mk}
\end{pmatrix}&=M^{mk}\begin{pmatrix}
1 \\
0
\end{pmatrix}=%\underbrace{M^kSS^{-1}M^kSS^{-1}\ldots M^kSS^{-1}}_{\text{m-1  times.}}M^k\begin{pmatrix}
%1  \\
%0
%\end{pmatrix}\nonumber\\&
(M^kSS^{-1})^{m-1}\begin{pmatrix}
U_{k+1}  \\
U_{k}
\end{pmatrix}.
\end{align}
By $(\ref{M1})$ and $(\ref{M2})$,
\beq\label{mkss} M^k S S^{-1}=\begin{pmatrix}
U_{k+1} & (-P U_{k+1}+V_{k+1})/2\\
U_{k} & (-P U_{k}+V_{k})/2
\end{pmatrix}.\eeq
By Lemma $\ref{L2}$ we can get the classical identity $ V_k=P U_k-2 Q U_{k-1}.$
With the help of which we eliminate $V_{k},V_{k+1}$ from $(\ref{mkss})$. Then
\beq\label{MSkSI}
M^k S S^{-1}=\begin{pmatrix}
U_{k+1} & - Q U_{k}\\
U_{k} & -Q U_{k-1}
\end{pmatrix}.
\eeq
Since $-Q U_{k-1}=U_{k+1}-P U_k$, we get
\beq\label{MSkSI1}
M^k S S^{-1}=\begin{pmatrix}
U_{k+1} & - Q U_{k}\\
U_{k} & U_{k+1}-P U_k
\end{pmatrix}.
\eeq
Finally, we can modify $(\ref{Rep1})$ into $(\ref{f02})$.
\end{proof}
If $m=2$ in $(\ref{f02})$, then we obtain the following identities:
\beq
\label{f03}
U_{2k}=U_{k}(2 U_{k+1}-P U_{k}),\eeq
\beq\label{f04}
U_{2k+1}=U_{k+1}^2-Q U_{k}^2.
\eeq
If  we replace $k$ by $k+1$ in $(\ref{f03})$ and use $U_{k+2}=PU_{k+1}-QU_k$, then we obtain
\beq\label{f05}
U_{2k+2}=U_{k+1}(P U_{k+1}-2Q U_{k}).
\eeq
Now using $(\ref{f03})$, $(\ref{f04})$, and $(\ref{f05})$  we can present an algorithm for computing two terms of $\{U_k(P,Q)\}$ with the indices $N$ and $N+1$. We need four temporary memories: $u_1, u_2, U_1, U_2$.

\smallskip
\noindent$\overline{\mbox{\underline{{\bf Algorithm 1} Computing the Lucas sequence $\{U_k(P,Q)\}$\quad\quad\quad \quad\quad\quad\quad\quad\quad\,\,\,\,\,\phantom{$\frac{1^2}{2}$}}}}$\\
 \noindent{\bf Input:} $N=\sum_{i=0}^{m-1}b_i 2^{i}$, $(b_{m-1}=1)$ \phantom{$\frac{\sqrt{A^7}}{4}$}\\
\noindent \phantom{{\bf Input:}} $P, Q$\\
\noindent{\bf Output:} $U_{N}$, $U_{N+1}$\\
\noindent \phantom{1}1: $U_1\leftarrow 1$; $U_2\leftarrow P$\\
\noindent \phantom{1}2: {\bf for} $j$ from $m-2$ to $0$ by $-1$ {\bf do}\\
\noindent \phantom{1}3: $u_1\leftarrow U_1$; $u_2\leftarrow U_2$\\
\noindent \phantom{1}4: \,\,\,\,\,\,\,\,\,\,{\bf if} $b_j=1$ {\bf then}\\
\noindent \phantom{1}5: \,\,\,\,\,\,\,\,\,\phantom{{\bf else if}}\,\,\, $U_1\leftarrow u_2^2-Q u_1^2$; $U_2\leftarrow u_2(P u_2-2Qu_1)$\\
\noindent \phantom{1}6: \,\,\,\,\,\,\,\,\,\,{\bf else if}\\
\noindent \phantom{1}7: \,\,\,\,\,\,\,\,\,\phantom{{\bf else if}}\,\,\, $U_1\leftarrow u_1(2 u_2-P u_1)$; $U_2\leftarrow u_2^2-Q u_1^2$\\
\noindent \phantom{1}8: \,\,\,\,\,\,\,\,\,\,{\bf end if}\\
\noindent \phantom{1}9: {\bf end for}\\
\underline{\noindent 11: {\bf return} $U_1, U_2$\phantom{${\frac{9}{9}}$}\quad\quad\quad\quad\quad\quad\quad\quad\quad\quad\quad\quad\quad\quad\quad\quad\quad\quad\quad\quad\quad\quad\quad\quad
\quad\quad\quad\,\,\,}

\smallskip
\noindent Such a computational  method was discussed by Reiter in \cite{1.5}. Previously \cite{1.6}, it was proposed for the Fibonacci numbers.

Suppose we have computed $U_{N}$, $U_{N+1}$ by Algorithm $1$, then with the help of $(\ref{f07})$ we get $W_N$. Using $W_{N+1}=B U_{N+1}-A Q U_{N}$ we get $W_{N+1}$. Thus, in a general case to compute the terms $U_{N}$, $U_{N+1}$ and $W_{N}$, $W_{N+1}$ we need $3m$  multiplications\footnote{We imply that $P, Q$ are not large. So multiplications that involve them are similar to additions.}, here $m=\lfloor\log_2 N\rfloor+1$. But when $Q=1$ or more generally $Q=a^2$, we can slightly transform \mbox{Algorithm $1$} so that we need only $2m$ multiplications. Indeed, when $Q=1$, we replace  the expression $u_2^2 - u_1^2$ by $(u_2-u_1)(u_2+u_1)$ at steps $5, 7$. When $Q=a^2$, we use the formula $u_2^2-Q u_1^2=(u_2-a u_1)(u_2+a u_1)$.

\subsection{Comparison with other existing algorithms}
Currently, the main algorithm \cite{1.8} for quick computation of the Lucas sequence terms $U_N$, $V_N$   uses the following properties:
\begin{align}\nonumber &V_{2k+1}=V_{k+1}V_{k}-P Q^k,\,\, V_{2k}=V_{k}^2-2 Q^k,\\
 &U_{2k+1}=U_{k+1}V_{k}-Q^k,\phantom{P}\,\, U_{2k}=U_{k}V_{k}.
\end{align}
When $Q=\pm1$, the algorithm needs  $3 m$ multiplications. When  $Q\not =\pm1$  and without any assumptions about $N$, this algorithm needs $11m/2$ multiplications. We see that  Algorithm $1$ is more effective in the general case, but there is an important case when the algorithm offered in \cite{1.8} is better. This is so when we need to compute the term $V_N(P,1)$ or $V_N(P,-1)$. For $N=2^s(2d+1)$ the algorithm in \cite{1.8}  needs  $2\lfloor\log_2 (2d+1)\rfloor+s$ multiplications while Algorithm $1$ needs $2\lfloor\log_2 (2d+1)\rfloor+2s$. So in applications such as Lucas-based cryptosystem \cite{1.16} and Lucas-Lehmer-Risel primality test \cite{1.15} it is preferable to use the algorithm offered in \cite{1.8}.

Now  we compare Algorithm $1$ with Fiduccia's algorithm. The characteristic polynomial is $g(x)=x^2-Px+Q$. To compute $x^N\bmod{g(x)}$ Fiduccia's algorithm uses classical method of repeating  squaring. For an arbitrary linear polynomial $h(x)=-u_1 x+u_2$  we have $h^2(x)\bmod{g(x)}=-u_1(2u_2-P u_1)x +u_2^2-Q u_1^2$.   As is seen from above, we can use the formulas $(\ref{f03})$, $(\ref{f04})$ for modular polynomial squarings. Therefore, Algorithm $1$ together with the formula $(\ref{f07})$ is one way of implementing Fiduccia's algorithm for second-order linear recurrences, where is used the explicit formulas for modular polynomial squarings.

\section{\bf{Computation of third-order linear recurrences}}
We will follow the notation for third-order linear recurrences according to \cite{1.18}. The sequences $\{X_k(p,q,r)\}$, $\{Y_k(p,q,r)\}$, and $\{Z_k(p,q,r)\}$ are defined recursively  by
\beq\label{xyz}
f_{k+3}=p f_{k+2}+q f_{k+1}+r f_{k},
\eeq
with the initial values $X_0=0$, $X_1=0$, $X_2=1$, $Y_0=0$, $Y_1=1$, $Y_2=0$, $Z_0=1$, $Z_1=0$, $Z_2=0$.
Similar to $(\ref{M1})$ we have
\beq\label{3M}
\begin{pmatrix}
X_{k+2} & Y_{k+2} & Z_{k+2}\\
X_{k+1} & Y_{k+1} & Z_{k+1}\\
X_{k} & Y_{k} & Z_{k}
\end{pmatrix}=M^{k} S, \text{ where } M=\begin{pmatrix}
p & q & r\\
1 & 0 & 0\\
0 & 1 & 0
\end{pmatrix},\,\, S=\begin{pmatrix}
1 & 0 & 0\\
0 & 1 & 0\\
0 & 0 & 1
\end{pmatrix}.
\eeq
\begin{lemma}\label{3L2}
Let the sequence $\{W_k(a_0,a_1,a_2;p,q,r)\}$ be  defined by the relation \beq\label{rel} W_{k+3}=p W_{k+2}+q W_{k+1}+r W_{k},\eeq
with the initial values $W_0=a_0, W_1=a_1, W_2=a_2$. Then
\beq\label{fW3}W_{k}=a_2 X_{k} + (a_1q+a_0r)X_{k-1}+a_1r X_{k-2},\eeq
\beq\label{fW3.1}W_{k}=(a_2-a_1 p-a_0 q)X_k+(a_1-a_0 p)X_{k+1}+a_0 X_{k+2}.\eeq
\end{lemma}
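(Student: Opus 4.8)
The plan is to mimic the proofs of Lemma \ref{L2} and Theorem \ref{T1}, replacing the $2\times 2$ companion matrix by its third-order analogue from $(\ref{3M})$. Writing $e_1,e_2,e_3$ for the standard basis column vectors, a direct check gives
\[
M\begin{pmatrix} W_{k+2}\\ W_{k+1}\\ W_k\end{pmatrix}=\begin{pmatrix} p W_{k+2}+qW_{k+1}+rW_k\\ W_{k+2}\\ W_{k+1}\end{pmatrix}=\begin{pmatrix} W_{k+3}\\ W_{k+2}\\ W_{k+1}\end{pmatrix},
\]
so iterating from $(W_2,W_1,W_0)=(a_2,a_1,a_0)$ yields
\[
\begin{pmatrix} W_{k+2}\\ W_{k+1}\\ W_k\end{pmatrix}=M^k\begin{pmatrix} a_2\\ a_1\\ a_0\end{pmatrix}=a_2\,M^k e_1+a_1\,M^k e_2+a_0\,M^k e_3.
\]
Since $S$ is the identity in $(\ref{3M})$, the three columns of $M^k$ are precisely the $X$-, $Y$-, and $Z$-columns; reading off the bottom entry produces the key reduction $W_k=a_2X_k+a_1Y_k+a_0Z_k$. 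It then remains only to express $Y_k$ and $Z_k$ through the single basic sequence $\{X_k\}$.

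For $(\ref{fW3.1})$ I would seek constants with $Y_k=c_0X_k+c_1X_{k+1}+c_2X_{k+2}$. As both sides satisfy $(\ref{xyz})$, it suffices to match the three initial values $k=0,1,2$, computing $X_3=p$ and $X_4=p^2+q$ from the recurrence. The resulting system is triangular and gives $Y_k=X_{k+1}-pX_k$; the same procedure gives $Z_k=X_{k+2}-pX_{k+1}-qX_k$. Substituting these into $W_k=a_2X_k+a_1Y_k+a_0Z_k$ and collecting the coefficients of $X_k,X_{k+1},X_{k+2}$ produces $(\ref{fW3.1})$.

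For $(\ref{fW3})$ I would instead exploit the columns of $M$ itself, exactly as the column $Me_2=(-Q,0)^{\mathsf T}$ was used in Lemma \ref{L2}. Here $Me_3=r\,e_1$ and $Me_2=q\,e_1+e_3$, so that $M^k e_3=r\,M^{k-1}e_1$ and $M^k e_2=q\,M^{k-1}e_1+M^{k-1}e_3=q\,M^{k-1}e_1+r\,M^{k-2}e_1$. Reading off the bottom entries gives $Z_k=rX_{k-1}$ and $Y_k=qX_{k-1}+rX_{k-2}$, and substitution into the reduction yields $(\ref{fW3})$. Alternatively, $(\ref{fW3})$ can be deduced from $(\ref{fW3.1})$ by using $(\ref{xyz})$ to replace the forward terms $X_{k+1},X_{k+2}$ by back-shifted ones.

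There is no deep obstacle: the argument is entirely linear algebra built on the state-vector recursion and the identity $M^k=M^{k-1}M$. The only point requiring care is the index bookkeeping in the backward relations for $Y_k$ and $Z_k$: one must route the shift through $M^{k-1}(Me_j)$, staying within the relevant range $k\ge 2$, rather than naively extending $\{X_k\}$ to the negative indices $X_{-1},X_{-2}$.
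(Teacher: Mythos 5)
Your proof is correct. For $(\ref{fW3})$ it is essentially identical to the paper's argument: the paper likewise writes $(W_{k+2},W_{k+1},W_k)^{\mathsf T}=a_2M^ke_1+a_1M^ke_2+a_0M^ke_3$ and pushes the shifts through $M^{k-1}(Me_2)=M^{k-1}(q,0,1)^{\mathsf T}$ and $M^{k-1}(Me_3)=rM^{k-1}e_1$, then once more through $M^{k-2}$, exactly as you do. Where you genuinely diverge is $(\ref{fW3.1})$: the paper deduces it from $(\ref{fW3})$ by substituting $X_{k-1}=(X_{k+2}-pX_{k+1}-qX_k)/r$ and $X_{k-2}=(X_{k+1}-pX_k-qX_{k-1})/r$, which requires dividing by $r$ (so, strictly, it assumes $r\neq 0$, with the case $r=0$ left to an unstated specialization or polynomial-identity argument). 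Your route instead determines $Y_k=X_{k+1}-pX_k$ and $Z_k=X_{k+2}-pX_{k+1}-qX_k$ by observing that both sides satisfy the recurrence $(\ref{xyz})$ and matching the three initial values; this is division-free, hence valid uniformly in $p,q,r$, including $r=0$ (which matters, since the paper's later remark specializes to $r=0$ to recover the second-order identities). Your version also makes explicit the clean intermediate identity $W_k=a_2X_k+a_1Y_k+a_0Z_k$, which the paper leaves implicit, and it derives up front the relations that the paper only states \emph{after} the lemma as its corollaries $(\ref{VviaU3})$--$(\ref{VviaU3.1})$. One small directional point: your parenthetical alternative of passing between the two formulas via $(\ref{xyz})$ is division-free only in the direction $(\ref{fW3.1})\Rightarrow(\ref{fW3})$ (replacing forward terms by backward ones); the paper's direction $(\ref{fW3})\Rightarrow(\ref{fW3.1})$ is the one that costs a division by $r$.
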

\begin{proof}
We have:
\begin{align}\begin{pmatrix}
W_{k+2}  \\
W_{k+1}\\
W_{k}
\end{pmatrix}=
M^{k}
\begin{pmatrix}
a_2 \\
a_1 \\
a_0
\end{pmatrix}&=
a_2 M^k
\begin{pmatrix}
1 \\
0 \\
0
\end{pmatrix}+
a_1 M^k
\begin{pmatrix}
0 \\
1 \\
0
\end{pmatrix}+
a_0 M^k
\begin{pmatrix}
0 \\
0 \\
1
\end{pmatrix}\nonumber\\
&=a_2\begin{pmatrix}
X_{k+2} \\
X_{k+1} \\
X_{k}\end{pmatrix}+
a_1 M^{k-1}\begin{pmatrix}
q \\
0 \\
1\end{pmatrix}+
a_0 M^{k-1}
\begin{pmatrix}
r \\
0 \\
0
\end{pmatrix}\nonumber\\
&=a_2\begin{pmatrix}
X_{k+2} \\
X_{k+1} \\
X_{k}\end{pmatrix}+
(a_1q+a_0r)\begin{pmatrix}
X_{k+1} \\
X_{k} \\
X_{k-1}\end{pmatrix}+
a_1 M^{k-2}\begin{pmatrix}
r \\
0 \\
0\end{pmatrix}\nonumber\\
&=
\begin{pmatrix}
a_2 X_{k+2} + (a_1q+a_0r)X_{k+1}+a_1r X_{k}\\
a_2 X_{k+1} + (a_1q+a_0r)X_{k}+a_1r X_{k-1} \\
a_2 X_{k} + (a_1q+a_0r)X_{k-1}+a_1r X_{k-2}\end{pmatrix}.
\end{align}
So we obtain $(\ref{fW3})$. With the help of $X_{k-2}=(X_{k+1}-p X_{k}-q X_{k-1})/r$ and $X_{k-1}=(X_{k+2}-p X_{k+1}-q X_{k})/r$ we  get $(\ref{fW3.1})$.
\end{proof}
By Lemma $\ref{3L2}$ we get the following
\begin{align}\label{VviaU3}
Y_k=qX_{k-1}+rX_{k-2},\\
\label{VviaU3.0}Z_k=r X_{k-1},\\
Y_k=X_{k+1}-pX_k,\\
\label{VviaU3.1}Z_k=X_{k+2}-pX_{k+1}-qX_{k}.
\end{align}
\begin{theorem}\label{T2}
Let $\{X_{k}(p,q,r)\}$ be the third-order linear recurrence sequence with the initial values $X_0=0, X_1=0, X_2=1$. Then
\beq\label{f08}
\begin{pmatrix}
X_{mk+2}  \\
X_{mk+1}  \\
X_{mk}
\end{pmatrix}=\begin{pmatrix}
X_{k+2} & qX_{k+1}+rX_k& rX_{k+1}\\
X_{k+1} & X_{k+2}-pX_{k+1}& rX_{k}\\
X_{k} & X_{k+1}-pX_{k}& X_{k+2}-pX_{k+1}-qX_k
\end{pmatrix}^{m-1}\begin{pmatrix}
X_{k+2}  \\
X_{k+1}  \\
X_{k}
\end{pmatrix}.
\eeq\end{theorem}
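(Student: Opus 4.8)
The plan is to mirror the proof of Theorem~\ref{T1}, now in dimension three. The starting point is the observation that, because $S=I$ in $(\ref{3M})$, the matrix power $M^k$ is \emph{itself} the array of recurrence values,
\beq\nonumber
M^k=\begin{pmatrix}
X_{k+2} & Y_{k+2} & Z_{k+2}\\
X_{k+1} & Y_{k+1} & Z_{k+1}\\
X_{k} & Y_{k} & Z_{k}
\end{pmatrix}.
\eeq
In particular its first column is $(X_{k+2},X_{k+1},X_k)^{T}=M^k(1,0,0)^{T}$, which is precisely the terminal vector appearing in $(\ref{f08})$.

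First I would set up the telescoping. Writing $M^{mk}=(M^k)^m$ and stripping off one factor of $M^k$ against $(1,0,0)^{T}$ gives
\begin{align}
\begin{pmatrix}
X_{mk+2}\\ X_{mk+1}\\ X_{mk}
\end{pmatrix}
&=M^{mk}\begin{pmatrix}1\\0\\0\end{pmatrix}
=(M^k)^{m-1}M^k\begin{pmatrix}1\\0\\0\end{pmatrix}\nonumber\\
&=(M^k)^{m-1}\begin{pmatrix}
X_{k+2}\\ X_{k+1}\\ X_{k}
\end{pmatrix}.\nonumber
\end{align}
After this, the only task remaining is to rewrite the single matrix $M^k$ so that each of its nine entries is expressed through the three consecutive terms $X_k,X_{k+1},X_{k+2}$.

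This last rewriting is the core of the argument, and it hinges on the fact that $(\ref{VviaU3})$--$(\ref{VviaU3.1})$ supply \emph{two} expressions for each of $Y$ and $Z$: a ``backward'' form ($Y_n=qX_{n-1}+rX_{n-2}$, $Z_n=rX_{n-1}$) and a ``forward'' form ($Y_n=X_{n+1}-pX_n$, $Z_n=X_{n+2}-pX_{n+1}-qX_n$). The trick is to choose, entry by entry, whichever form keeps the index inside the window $\{k,k+1,k+2\}$. For the $Y$-column I would take the backward form on the top entry, $Y_{k+2}=qX_{k+1}+rX_k$, and the forward form on the lower two, $Y_{k+1}=X_{k+2}-pX_{k+1}$ and $Y_k=X_{k+1}-pX_k$. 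For the $Z$-column I would take the backward form on the top two entries, $Z_{k+2}=rX_{k+1}$ and $Z_{k+1}=rX_k$, and the forward form on the bottom entry, $Z_k=X_{k+2}-pX_{k+1}-qX_k$. Inserting these into $M^k$ reproduces exactly the matrix displayed in $(\ref{f08})$.

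The main obstacle is really this bookkeeping: one must check that the mixed forward/backward choice leaves no stray term such as $X_{k+3}$ or $X_{k-2}$ outside the three-term window. What makes it work is precisely the availability of both a down-shifted and an up-shifted representation of $Y$ and $Z$, so every problematic boundary index can be sidestepped by selecting the correct version for that row.
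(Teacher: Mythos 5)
Your proposal is correct and follows the paper's own argument essentially verbatim: the same telescoping $M^{mk}(1,0,0)^{T}=(M^k)^{m-1}M^k(1,0,0)^{T}$ as in $(\ref{Rep13})$, followed by eliminating $Y$ and $Z$ from $M^k$ via $(\ref{VviaU3})$--$(\ref{VviaU3.1})$ to obtain the matrix $(\ref{MSkSI3})$. Your explicit row-by-row choice of the ``backward'' versus ``forward'' forms is exactly the bookkeeping the paper leaves implicit when it says the elimination ``may be done in such a way that $M^k$ will contain only $X_k$, $X_{k+1}$, $X_{k+2}$.''
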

\begin{proof}
According to $(\ref{3M})$, we have
\begin{align}\label{Rep13}
\begin{pmatrix}
X_{mk+2}  \\
X_{mk+1}  \\
X_{mk}
\end{pmatrix}&=M^{mk}\begin{pmatrix}
1 \\
0 \\
0
\end{pmatrix}=\left(M^k\right)^{m-1}\begin{pmatrix}
X_{k+2}  \\
X_{k+1}  \\
X_k
\end{pmatrix}.
\end{align}
Using $(\ref{VviaU3})-($\ref{VviaU3.1}$)$ we eliminate  $Y_{k}, Y_{k+1}, Y_{k+2}, Z_{k}, Z_{k+1}, Z_{k+2}$ from $M^k$. This may be done in such a way that $M^k$ will contain only $X_k$,  $X_{k+1}$, $X_{k+2}$. We obtain
\beq\label{MSkSI3}
M^k=\begin{pmatrix}
X_{k+2} & qX_{k+1}+rX_k& rX_{k+1}\\
X_{k+1} & X_{k+2}-pX_{k+1}& rX_{k}\\
X_{k} & X_{k+1}-pX_{k}& X_{k+2}-pX_{k+1}-qX_k
\end{pmatrix}.
\eeq
Finally, we can modify $(\ref{Rep13})$ into $(\ref{f08})$.
\end{proof}
If we put $m=2$ in $(\ref{f08})$, then we get the following formulas:
\begin{align}\label{X3}
X_{2k+2}=&X_{k+2}^2+X_{k+1}(q X_{k+1}+2rX_k),\\
\label{2k+1}X_{2k+1}=&r X_k^2+X_{k+1}(2 X_{k+2}-p X_{k+1}),\\
\label{X3.1}X_{2k}=&X_{k+1}^2+X_{k}(2 X_{k+2}-2p X_{k+1}-q X_k).
\end{align}
\noindent{\bf Remark.} If we  put $r=0$, $q=-Q$ in these formulas and subtract $1$ from all indices, then up to the substitution of $U$ for $X$  we obtain the identities  for second-order recurrences. It  follows from $X_{k+1}(P,-Q,0)=U_{k}(P,Q)$.

\noindent{\bf Remark.} If we calculate the remainder
\beq\label{f010}
(c_1 x^2 +c_2 x + c_3)^2\bmod x^3 - p x^2 - q x - r,
\eeq
then we obtain the formulas similar (but not the same) to  $(\ref{X3})-(\ref{X3.1})$ for  squaring of quadratic polynomials  modulo $g(x)=x^3 - p x^2 - q x - r$. They can be used in Fiduccia's algorithm for computing third-order recurrences.

To get an algorithm for computing $X_N$, $X_{N+1}$, $X_{N+2}$ similar to the binary exponentiation we need to be able  to compute $X_{2k}$, $X_{2k+1}$, $X_{2k+2}$, $X_{2k+3}$ using $X_k$, $X_{k+1}$, $X_{k+2}$. So we need another formula that helps us to compute $X_{2k+3}$. It can be obtained from $(\ref{2k+1})$ if we replace $k$ by $k+1$ and use  $X_{k+3}=p X_{k+2}+q X_{k+1}+r X_{k}$. It is as follows:
\beq\label{2k+3}
X_{2k+3}=r X_{k+1}^2+X_{k+2}(p X_{k+2}+2 q X_{k+1}+2 r X_{k}).
\eeq
Now we present an algorithm based on the formulas $(\ref{X3})-(\ref{X3.1})$, $(\ref{2k+3})$. We need to use six temporary memories.

\bigskip

\noindent$\overline{\mbox{\underline{{\bf Algorithm 2} Computing the third-order linear recurrence $\{X_k(p,q,r)\}$\quad\quad\quad\, \hfill\phantom{$\frac{1^2}{2}$}}}}$\\
 \noindent{\bf Input:} $N=\sum_{i=0}^{m-1}b_i 2^{i}$, $(b_{m-1}=1)$ \phantom{$\frac{\sqrt{A^7}}{4}$}\\
\noindent \phantom{{\bf Input:}} $p, q, r$\\
\noindent{\bf Output:} $X_{N}$, $X_{N+1}$, $X_{N+2}$\\
\noindent \phantom{1}1: $X_1\leftarrow 0$; $X_2\leftarrow 1$; $X_3\leftarrow p$\\
\noindent \phantom{1}2: {\bf for} $j$ from $m-2$ to $0$ by $-1$ {\bf do}\\
\noindent \phantom{1}3: $x_1\leftarrow X_1$; $x_2\leftarrow X_2$; $x_3\leftarrow X_3$\\
\noindent \phantom{1}4: \,\,\,\,\,\,\,\,\,\,{\bf if} $b_j=1$ {\bf then}\\
\noindent \phantom{1}5: \,\,\,\,\,\,\,\,\,\,\phantom{{\bf else if}} $X_1\leftarrow r x_1^2+x_2(2 x_3-p x_2)$; $X_2\leftarrow x_{3}^2+x_{2}(q x_{2}+2r x_1)$;\\
\noindent \phantom{15:} \,\,\,\,\,\,\,\,\,\,\phantom{{\bf else if}}  $X_3\leftarrow r x_{2}^2+x_{3}(p x_{3}+2 q x_{2}+2 r x_{1})$\\
\noindent \phantom{1}6: \,\,\,\,\,\,\,\,\,\,{\bf else if}\\
\noindent \phantom{1}7: \,\,\,\,\,\,\,\,\,\phantom{{\bf else if}} $X_1\leftarrow x_{2}^2+x_{1}(2 x_{3}-2p x_{2}-q x_1)$; $X_2\leftarrow r x_1^2+x_2(2 x_3-p x_2)$;\\
\noindent \phantom{17:} \,\,\,\,\,\,\,\,\,\phantom{{\bf else if}} $X_3\leftarrow x_{3}^2+x_{2}(q x_{2}+2rx_1)$\\
\noindent \phantom{1}8: \,\,\,\,\,\,\,\,\,\,{\bf end if}\\
\noindent \phantom{1}9: {\bf end for}\\
\underline{\noindent 11: {\bf return} $X_1, X_2, X_3$\phantom{${\frac{9}{9}}$}\quad\quad\quad\quad\quad\quad\quad\quad\quad\quad\quad\quad\quad\quad\quad\quad\quad\quad\quad\quad\quad\quad\quad
\quad\quad\,\,\,}

\noindent We will imply that multiplications by $p, q, r$ can be simulated by additions. Then algorithm $2$ needs $3m$ multiplications and $3m$ squarings. At the end of this section, we refer to some applications that use computation of remote terms of third-order linear recurrence sequences; see \cite{1.19, Gong, Adams, Cho}.
%
%we note that some applications concerning to third-order recurrences can be found in \cite{1.19, Gong, Adams, Cho}.
\section{\bf{Computation of fourth-order linear recurrences}}
Since this section is similar to the previous one, we give only the main formulas and the final algorithm.

The fourth-order linear recurrence $\{W_k(a_0,a_1,a_2,a_3; p_0,p_1,p_2,p_3)\}$ is defined \mbox{recursively}  by
\beq\label{xyz1}
f_{k+4}=p_0 f_{k+3}+p_1 f_{k+2}+p_2 f_{k+1}+p_3 f_{k},
\eeq
with the initial values $W_0=a_0, W_1=a_1$, $W_2=a_2$, $W_3=a_3$. Denote the sequence $\{W_k(0,0,0,1; p_0,p_1,p_2,p_3)\}$ by $\{X_k(p_0,p_1,p_2,p_3)\}$. The formulas which can be obtained by the matrix method are:
\begin{align}\label{basic4}
W_k=&a_3 X_k+(a_0 p_3+a_1p_2+a_2p_1)X_{k-1}+(a_1p_3+a_2p_2)X_{k-2}+a_2p_3X_{k-3},\\
\label{basic4.1}W_k=&(a_3-a_2p_0-a_1p_1-a_0p_2)X_k+(a_2-a_1p_0-a_0p_1)X_{k+1}+\nonumber\\
&(a_1-a_0 p_0)X_{k+2}+a_0X_{k+3}.
\end{align}

We will use $W_k(a_0,a_1,a_2,a_3)$ instead of $W_k(a_0,a_1,a_2,a_3; p_0,p_1,p_2,p_3)$. By $(\ref{basic4})$, $(\ref{basic4.1})$, and $(\ref{xyz1})$  we obtain the following
\begin{align}\label{VviaU4}
W_k(0,0,1,0)&=p_1X_{k-1}+p_2X_{k-2}+p_3X_{k-3},\\
\label{VviaU4.0}W_k(0,1,0,0)&=p_2X_{k-1}+p_3X_{k-2},\\
W_k(1,0,0,0)&=p_3X_{k-1},\\
W_k(0,0,1,0)&=-p_0 X_{k}+X_{k+1},\\
W_k(0,1,0,0)&=-p_1X_{k}-p_0X_{k+1}+X_{k+2},\\
\label{VviaU4.1}W_k(1,0,0,0)&=-p_2X_{k}-p_1X_{k+1}-p_0X_{k+2}+X_{k+3}.
\end{align}
For convenience, we use the notation $^{i}W_{k}$ for $W_k(a_0,a_1,a_2,a_3)$ with only one nonzero $a_i=1$. Then by the matrix method we get
\begin{align}\label{Finally4}
\begin{pmatrix}
X_{mk+3}  \\
X_{mk+2}  \\
X_{mk+1}  \\
X_{mk}
\end{pmatrix}=
\begin{pmatrix}
X_{k+3} & ^{2}W_{k+3}&^{1}W_{k+3}&^{0}W_{k+3}\\
X_{k+2} & ^{2}W_{k+2}& ^{1}W_{k+2}&^{0}W_{k+2}\\
X_{k+1} & ^{2}W_{k+1}& ^{1}W_{k+1}&^{0}W_{k+1}\\
X_{k} & ^{2}W_k& ^{1}W_{k}& ^{0}W_{k}
\end{pmatrix}^{m-1}\begin{pmatrix}
X_{k+3}  \\
X_{k+2}  \\
X_{k+1}  \\
X_{k}
\end{pmatrix}.
\end{align}
With the help of $(\ref{VviaU4})-(\ref{VviaU4.1})$ we transform the matrix in $(\ref{Finally4})$ and obtain
\begin{align}\label{Fin}
\begin{pmatrix}
X_{k+3}&p_1X_{k+2}+p_2X_{k+1}+p_3X_{k}&p_2X_{k+2}+p_3X_{k+1}&p_3X_{k+2}\\
X_{k+2}&X_{k+3}-p_0X_{k+2}&p_2X_{k+1}+p_3X_{k}&p_3X_{k+1}\\
X_{k+1}&X_{k+2}-p_0X_{k+1}&X_{k+3}-p_0X_{k+2}-p_1X_{k+1}&p_3X_{k}\\
X_{k}&X_{k+1}-p_0X_{k}&X_{k+2}-p_0X_{k+1}-p_1X_k&R_{4,4}
\end{pmatrix}.
\end{align}
Here, $R_{4,4}=X_{k+3}-p_0X_{k+2}-p_1X_{k+1}-p_2X_{k}$. If we put $m=2$ in $(\ref{Finally4})$, then after simplification we get the following formulas:
\begin{align}\label{X4}
X_{2k+3}=&X_{k+3}^2+X_{k+2}(p_1 X_{k+2}+2p_2X_{k+1}+2p_3X_k)+p_3 X_{k+1}^2,\\
\label{X4.0}
X_{2k+2}=&X_{k+2}(2X_{k+3}-p_0X_{k+2})+X_{k+1}(p_2X_{k+1}+2p_3X_k),\\
\label{X4.11}X_{2k+1}=&X_{k+2}^2+X_{k+1}(2X_{k+3}-2p_0X_{k+2}-p_1X_{k+1})+p_3 X_{k}^2,\\
\label{X4.1}X_{2k}=&X_{k+1}(2X_{k+2}-p_0X_{k+1})+X_{k}(2 X_{k+3}-2p_0 X_{k+2}-2p_1 X_{k+1}-p_2 X_k).
\end{align}
We also need the formula for $X_{2k+4}$. It can be obtained from $(\ref{X4.0})$ if we replace $k$ by $k+1$ and use $(\ref{xyz1})$ for $X_{k+4}$. It is as follows:
\beq\label{X44}
X_{2k+4}=X_{k+3}(p_0X_{k+3}+2p_1X_{k+2}+2p_2X_{k+1}+2p_3X_k)+X_{k+2}(p_2X_{k+2}+2p_3X_{k+1}).
\eeq

\bigskip
\noindent$\overline{\mbox{\underline{{\bf Algorithm 3} Computing the fourth-order linear recurrence $\{X_k(p_0,p_1,p_2,p_3)\}$\phantom{$\frac{1^2}{2}$}}}}$\\
 \noindent{\bf Input:} $N=\sum_{i=0}^{m-1}b_i 2^{i}$, $(b_{m-1}=1)$ \phantom{$\frac{\sqrt{A^7}}{4}$}\\
\noindent \phantom{{\bf Input:}} $p_0, p_1, p_2, p_3$\\
\noindent{\bf Output:} $X_{N}$, $X_{N+1}$, $X_{N+2}$, $X_{N+3}$\\
\noindent \phantom{1}1: $X_1\leftarrow 0$; $X_2\leftarrow 0$; $X_3\leftarrow 1$; $X_4\leftarrow p_0$\\
\noindent \phantom{1}2: {\bf for} $j$ from $m-2$ to $0$ by $-1$ {\bf do}\\
\noindent \phantom{1}3: $x_1\leftarrow X_1$; $x_2\leftarrow X_2$; $x_3\leftarrow X_3$; $x_4\leftarrow X_4$\\
\noindent \phantom{1}4: \,\,\,\,\,\,\,\,\,\,{\bf if} $b_j=1$ {\bf then}\\
\noindent \phantom{1}5: \,\,\,\,\,\,\,\,\,\,\phantom{{\bf else if}} $X_1\leftarrow x_{3}^2+x_{2}(2x_{4}-2p_0x_{3}-p_1x_{2})+p_3 x_{1}^2$; \\
\noindent \phantom{15:} \,\,\,\,\,\,\,\,\,\,\phantom{{\bf else if}} $X_2\leftarrow x_{3}(2x_{4}-p_0x_{3})+x_{2}(p_2x_{2}+2p_3x_1)$;\\
\noindent \phantom{15:} \,\,\,\,\,\,\,\,\,\,\phantom{{\bf else if}} $X_3\leftarrow x_{4}^2+x_{3}(p_1 x_{3}+2p_2x_{2}+2p_3x_1)+p_3 x_{2}^2$;\\
\noindent \phantom{15:} \,\,\,\,\,\,\,\,\,\,\phantom{{\bf else if}} $X_4\leftarrow x_{4}(p_0x_{4}+2p_1x_{3}+2p_2x_{2}+2p_3x_1)+x_{3}(p_2x_{3}+2p_3x_{2})$\\
\noindent \phantom{1}6: \,\,\,\,\,\,\,\,\,\,{\bf else if}\\
\noindent \phantom{1}7: \,\,\,\,\,\,\,\,\,\phantom{{\bf else if}} $X_1\leftarrow x_{2}(2x_{3}-p_0x_{2})+x_{1}(2 x_{4}-2p_0 x_{3}-2p_1 x_{2}-p_2 x_1)$;\\
\noindent \phantom{17:} \,\,\,\,\,\,\,\,\,\phantom{{\bf else if}} $X_2\leftarrow x_{3}^2+x_{2}(2x_{4}-2p_0x_{3}-p_1x_{2})+p_3 x_{1}^2;$\\
\noindent \phantom{17:} \,\,\,\,\,\,\,\,\,\phantom{{\bf else if}} $X_3\leftarrow x_{3}(2x_{4}-p_0x_{3})+x_{2}(p_2x_{2}+2p_3x_1);$\\
\noindent \phantom{17:} \,\,\,\,\,\,\,\,\,\phantom{{\bf else if}} $X_4\leftarrow x_{4}^2+x_{3}(p_1 x_{3}+2p_2x_{2}+2p_3x_1)+p_3 x_{2}^2$\\
\noindent \phantom{1}8: \,\,\,\,\,\,\,\,\,\,{\bf end if}\\
\noindent \phantom{1}9: {\bf end for}\\
\underline{\noindent 11: {\bf return} $X_1, X_2, X_3, X_4$\phantom{${\frac{9}{9}}$}\quad\quad\quad\quad\quad\quad\quad\quad\quad\quad\quad\quad\quad\quad\quad\quad\quad\quad\quad\quad\quad\quad
\quad\,\,\,\,}

%\smallskip
\noindent As is seen from the algorithm we need $6m$ multiplications and $4m$ squarings to compute the terms $X_N$, $X_{N+1}$, $X_{N+2}$, $X_{N+3}$. Here, as in the previous section, we count only ``big" multiplications. %Thus, by this algorithm and the formula  $(\ref{basic4.1})$ we can compute remote terms of an arbitrary fourth-order recurrence.
%{\bf Remark.} If we put $p_3=0$, $p_2=r$, $p_1=q$, $p_0=p$ in $(\ref{X4})$-$(\ref{X44})$ and subtract $1$ from all indices, then we obtain the identities for third-order recurrences.
\section{\bf{Computation of linear recurrence sequences of  any order}}
Let $\{W_k(a_0,\ldots,a_{n-1};p_0\ldots p_{n-1})\}$ be an $n{\it th}$-order linear recurrence defined by the relation  $f_{k+n}=\sum_{i=0}^{n-1}p_if_{k+n-1-i}$, with the initial values $W_i=a_i$ $(0\leq i\leq n-1)$. Let $\{X_k(p_0,\ldots,p_{n-1})\}$ be the sequence that is derived from $\{W_k\}$ if $a_{n-1}=1$ and the other $a_i=0$. Using the matrix method as in Lemma $\ref{3L2}$ and by mathematical induction we get the following formulas
\begin{align}\label{Ind}
W_k&=a_{n-1}X_k+\sum_{j=1}^{n-1}\(p_j\sum_{i=0}^{j-1}a_{n-2-i}X_{k-j+i}\),\\
\label{Ind2}W_k&=\sum_{j=0}^{n-1}\(a_{n-1-j}-\sum_{i=0}^{n-j-2}a_{n-j-2-i}p_i\)X_{k+j}.
\end{align}
If we put $n=4$ in these formulas, then we obtain $(\ref{basic4})$ and $(\ref{basic4.1})$.

Repeating the arguments of the previous section we get the matrix formula
\begin{align}\label{Matr}
\begin{pmatrix}
X_{2k+n-1}  \\
\vdots  \\
X_{2k+1}  \\
X_{2k}
\end{pmatrix}=
\begin{pmatrix}
X_{k+n-1} & ^{n-2}W_{k+n-1}&^{n-3}W_{k+n-1}&\dots&^{0}W_{k+n-1}\\
\vdots  & \vdots     & \vdots &\ddots   &\vdots\\
X_{k+1} & ^{n-2}W_{k+1}& ^{n-3}W_{k+1}&\dots&^{0}W_{k+1}\\
X_{k} & ^{n-2}W_k& ^{n-3}W_{k}&\dots& ^{0}W_{k}
\end{pmatrix}\begin{pmatrix}
X_{k+n-1}  \\
\vdots  \\
X_{k+1}  \\
X_{k}
\end{pmatrix}.
\end{align}
Here, as above $^{i}W_{k}$ denotes  $W_k(a_0,\ldots,a_{n-1})$ with only one nonzero $a_i=1$.
Let $R=\(r_{i,j}\)$ be the matrix from $(\ref{Matr})$. It has a special form, see   $(\ref{f08})$ and $(\ref{Fin})$. Note that if we know two rows of the matrix $R$ which have numbers of different parity, then we can get the other rows. For example, we assume we know a formula which relates $X_{2k+\ell}$ to $X_{k+i}$ \mbox{$(0\leq i\leq n-1)$}, in other words we know the $(n-\ell)${\it th} row. If we replace $k$ by $k+1$ and use $X_{k+n}=\sum_{i=0}^{n-1}p_i X_{k+n-1-i}$, then we get the formula for  \mbox{$X_{2k+\ell+2}$} that corresponds to the \mbox{$(n-\ell-2)${\it th} row}. Repeating this procedure we obtain all rows with numbers  of the same parity as the parity of the \mbox{$(n-\ell)${\it th} row}. Thus, to get all formulas that will be used in the algorithm, we  need to know formulas for $X_{2k}$, $X_{2k+1}$.

For the elements of $R$ using $(\ref{Ind})$, $(\ref{Ind2})$ we obtain
\begin{align}\label{R}r_{i,j}=
\begin{cases}
X_{k+n-1-(i-j)}-\sum_{l=0}^{j-2}p_{l}X_{k+n-2-l-(i-j)}, \text{ if $i\geq j$},\\
\sum_{l=j-1}^{n-1}p_{l}X_{k+n-2-l-(i-j)}, \text{ if $i<j$}.\end{cases}\\
\nonumber\end{align}
Also, from the two last rows in $(\ref{Matr})$ we obtain the formulas which relate $X_{2k}$, $X_{2k+1}$ to $X_{k+i}$ \mbox{$(0\leq i\leq n-1)$}. These formulas are of the same form as  $(\ref{X4.11})$, $(\ref{X4.1})$.
\begin{align}\label{X(2k)}
X_{2k} &=e X_{k+(n-1)/2}^2+\nonumber\\&\sum_{i=0}^{\lfloor v\rfloor}X_{k+\lfloor v\rfloor-i} \left(2X_{k+\lceil v\rceil+1+i}-
p_{2i+e}X_{k+\lfloor v\rfloor-i}-2\sum_{j=0}^{2i-1+e}p_{j}X_{k+\lceil v\rceil+i-j}\right),
\end{align}
\begin{align}\label{X(2k+1)}
&X_{2k+1}=p_{n-1}X_{k}^2+(1-e) X_{k+n/2}^2+\nonumber\\&\sum_{i=0}^{\lceil v\rceil-1}X_{k+\lceil v\rceil-i} \left(2X_{k+\lfloor v\rfloor+2+i}-
p_{2i+1-e}X_{k+\lceil v\rceil-i}-2\sum_{j=0}^{2i-e}p_{j}X_{k+\lfloor v\rfloor+1+i-j}\right).
\end{align}
Here,  $e=n\bmod 2$ and $v=n/2-1$. %These formulas are the generalization of
%These formulas generalize $(\ref{X3.1})$, $(\ref{2k+1})$, $(\ref{X4.1})$, $(\ref{X4.11})$.

The scheme for computing terms of $\{W_{k}(a_0,\ldots,a_{n-1};p_0,\ldots,p_{n-1})\}$ is:

\noindent $(i)$
Using $X_{k+n}=\sum_{i=0}^{n-1}p_i X_{k+n-1-i}$ and repeating  the replacement of $k$ by $k+1$ in $(\ref{X(2k)})$, $(\ref{X(2k+1)})$ without removing brackets we  obtain the formulas for   $X_{2k+i}$ $(0\leq i \leq n)$. %They contain the same number of multiplications as $(\ref{X(2k)})$ or $(\ref{X(2k+1)})$ (depending on the parity of $i$). 
These formulas determine the rules of transition from the  terms $X_{k+i}$ $(0\leq i\leq n-1)$ to $X_{2k+i}$ $(0\leq i\leq n-1)$ and also to $X_{2k+1+i}$ $(0\leq i\leq n-1)$.
\\
\noindent $(ii)$  By using these formulas we obtain an algorithm for computing $\{X_k\}$ that is similar to Algorithm $3$.
\\
\noindent $(iii)$ To get the value $W_N$ we need to use $(\ref{Ind2})$ after we have computed $X_{N+i}$ $(0\leq i\leq n-1)$ by the algorithm in $(ii)$.
\\
\noindent $(iv)$ In order to obtain $W_{N+1}$ we use the recurrence relation to get $X_{N+n}$ from $X_{N+i}$ $(0\leq i\leq n-1)$ and use $(\ref{Ind2})$.

\smallskip
\noindent{\bf Remark.} To compute the $N${\it th} term of an $n${\it th}-order linear recurrence we need $n(n+1)/2\log_2N$  multiplications\footnote{We use such a complexity model that multiplications involving $p_i$ are similar to additions.}. Indeed,  when $n$ is even, the formulas for  $X_{2k+2i}$ $(0\leq i\leq n/2)$ contain $n/2$  multiplications\footnote{Since they were derived from $(\ref{X(2k)})$ without removing brackets.} and for  $X_{2k+2i+1}$ $(0\leq i\leq n/2-1)$ contain $n/2+1$ multiplications. It is easy to see that each step of the algorithm needs $n/2$ formulas of the first type and $n/2$ formulas of the second type. Then to compute $X_{2k+i}$ $(0\leq i\leq n-1)$ or $X_{2k+1+i}$ $(0\leq i\leq n-1)$ using $X_{k+i}$ $(0\leq i\leq n-1)$ we need $n(n+1)/2$ multiplications. Thus, computing $X_{N+i}$ $(0\leq i\leq n-1)$ needs $n(n+1)/2\log_2N$ multiplications. Since $(\ref{Ind2})$ does not contain  ``big" multiplications, the above statement is proved for even $n$. The proof for odd $n$ by analogous.

Finally, we give the implementation of the above scheme in {\tt Mathematica}\footnote{Version Number: 10.4.0.0.}. The  function $\text{{\bf AnyOrderRecurrence}}[a,p,N]$ returns $W_N(a_0,\ldots,a_{n-1};p_0,\ldots,p_{n-1})$, where $N$ is a positive integer and $a$, $p$ are strings of length $n$.
\newpage \begin{adjustwidth}{-10em}{-10em}
\begin{center}
 \includegraphics[scale=0.7]{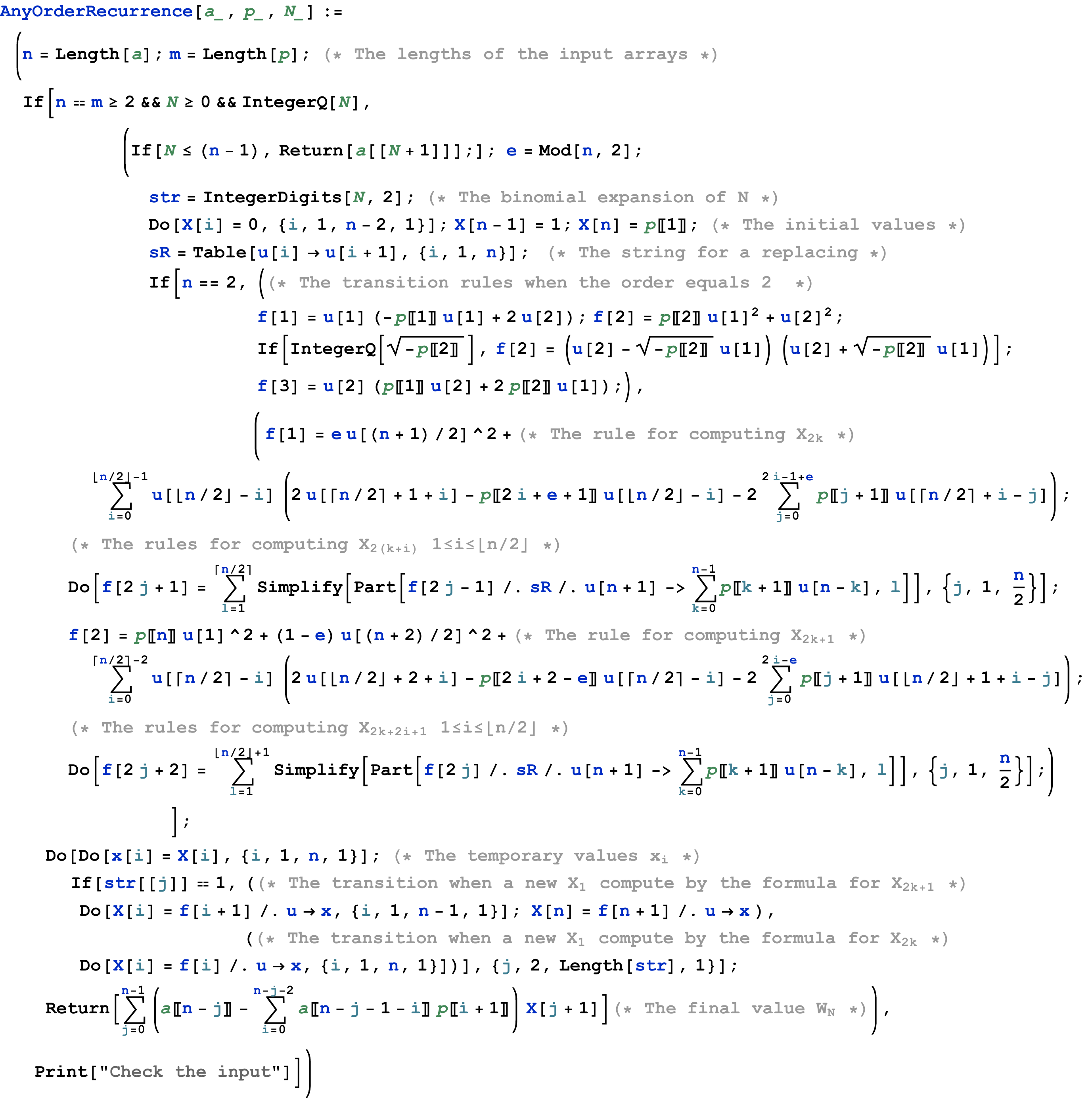}
 \end{center}\end{adjustwidth}

\noindent{\bf Acknowledgments.} The author is very grateful to A. Bostan for pointing to the reference \cite{0} and for the evidence that our algorithm  is one particular way of implementing Fiduccia's algorithm, where modular polynomial squarings are hard-coded.

\medskip
\end{document}